\documentclass[]{elsarticle}
\usepackage{amssymb, amsmath, mathrsfs, amsthm, natbib,
hyperref}

\theoremstyle{definition}
\newtheorem{define}{Definition}[section]
\newtheorem{thm}[define]{Theorem}
\newtheorem{lemma}[define]{Lemma}
\newtheorem{remark}[define]{Remark}

\newtheorem{cor}[define]{Corollary}

\numberwithin{equation}{section}

\begin{document}

\begin{frontmatter}

\title{$k$-isomorphism classes of local field
extensions}
\author{Duc Van Huynh \fnref{fn1}}
\ead{duc.huynh@armstrong.edu}
\author{Kevin Keating}
\ead{keating@ufl.edu}
\address{Department of Mathematics, University of
Florida, Gainesville, FL 32611-8105, USA}
\fntext[fn1]{Current address: Department of Mathematics, 
Armstrong State University, Savannah, GA 31419}
\date{\today}

\begin{abstract}
Let $K$ be a local field of characteristic $p$ with
perfect residue field $k$.  In this paper we find a set
of representatives for the $k$-isomorphism classes of
totally ramified separable extensions $L/K$ of degree
$p$.  This extends work of Klopsch, who found
representatives for the $k$-isomorphism classes of
totally ramified Galois extensions $L/K$ of degree $p$.
\end{abstract}

\end{frontmatter}

\section{Introduction and results}

Let $K$ be a local field with perfect residue field $k$
and let $K_{s}$ be a separable closure of $K$.  The
problem of enumerating finite subextensions $L/K$ of
$K_{s}/K$ has a long history (see for instance
\cite{krasner}).  Alternatively, one might wish to
enumerate isomorphism classes of extensions.  Say that
the finite extensions $L_1/K$ and $L_2/K$ are {\em
$K$-isomorphic} if there is a field isomorphism $\sigma
: L_1 \rightarrow L_2$ which induces the identity map on
$K$.  In this case the extensions $L_1/K$ and $L_2/K$
share the same field-theoretic and arithmetic data; for
instance their degrees, automorphism groups, and
ramification data must be the same.  In the case where
$K$ is a finite extension of the $p$-adic field
$\mathbb{Q}_p$, Monge \cite{monge} computed the number
of $K$-isomorphism classes of extensions $L/K$ of degree
$n$, for arbitrary $n\ge1$.

One says that the finite extensions $L_1/K$ and $L_2/K$
are {\em $k$-isomorphic} if there is a field isomorphism
$\sigma : L_1 \rightarrow L_2$ such that $\sigma(K) = K$
and $\sigma$ induces the identity map on $k$.  Such an
isomorphism is automatically continuous (see
Lemma~\ref{contin}).  If the
extensions $L_1/K$ and $L_2/K$ are $k$-isomorphic then
they have the same field-theoretic and arithmetic
properties.  Let $\text{Aut}_k(K)$ denote the group of
field automorphisms of $K$ which induce the identity map
on $k$.  Then $\text{Aut}_k(K)$ is finite if char$(K) =
0$, infinite if char$(K) = p$.  Since every
$k$-isomorphism $\sigma$ from $L_1/K$ to $L_2/K$ induces
an element of $\text{Aut}_k(K)$, this suggests that
$k$-isomorphisms should be more plentiful when
char$(K)=p$.  In this paper we consider the problem of
classifying $k$-isomorphism classes of finite totally
ramified extensions of a local field $K$ of
characteristic $p$.

As one might expect, the tame case is straightforward:
It is easily seen that if $n \in \mathbb{N}$ is
relatively prime to $p$ then there is a unique
$k$-isomorphism class of totally ramified extensions
$L/K$ of degree $n$.  We will focus on ramified
extensions of degree $p$, which are the simplest
non-tame extensions.  Since any two $k$-isomorphic
extensions have the same ramification data, it makes
sense to classify $k$-isomorphism classes of degree-$p$
extensions with fixed ramification break $b > 0$.

Let $\mathcal{E}_{b}$ denote the set of all totally
ramified subextensions of $K_{s}/K$ of degree $p$ with
ramification break $b$, and let $\mathcal{S}_{b}$ denote
the set of $k$-isomorphism classes of elements of
$\mathcal{E}_{b}$.  Let $\mathcal{S}_{b}^g$ denote the
set of $k$-isomorphism classes of Galois extensions in
$\mathcal{E}_{b}$, and let $\mathcal{S}_{b}^{ng}$ denote
the set of $k$-isomorphism classes of non-Galois
extensions in $\mathcal{E}_{b}$.  As we will see in
Section~\ref{amanowork}, if $b$ is the ramification
break of an extension of degree $p$ then $(p-1)b \in
\mathbb{N} \smallsetminus p \mathbb{N}$.  Hence
$\mathcal{S}_{b}$ is empty if $b \not \in \frac{1}{p-1}
\cdot (\mathbb{N} \smallsetminus p \mathbb{N})$.

\begin{thm} \label{theorem}
Let $b \in \frac{1}{p-1} \cdot (\mathbb{N}
\smallsetminus p \mathbb{N})$ and write $b =
\frac{(m-1)p + \lambda}{p-1}$ with $1 \le \lambda \le
p-1$.  Let $R = \{\omega_i : i \in I\}$ be a set of
coset representatives for $k^{\times} /
(k^{\times})^{(p-1) b}$.  For each $\omega_i \in R$ let
$\pi_i \in K_{s}$ be a root of the
polynomial $X^p - \omega_i \pi_K^m X^{\lambda} - \pi_K$.
Then the map which carries $\omega_i$ onto the
$k$-isomorphism class of $K(\pi_i)/K$ gives a
bijection from $R$ to $\mathcal{S}_{b}$.  Furthermore,
$K(\pi_i)/K$ is Galois if and only if $b \in \mathbb{N}
\smallsetminus p \mathbb{N}$ and $\lambda \omega_i \in
(k^{\times})^{p-1}$.
\end{thm}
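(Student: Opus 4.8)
The plan is to establish the bijection through four steps: (i) each $K(\pi_i)/K$ genuinely lies in $\mathcal{E}_b$; (ii) the assignment is surjective; (iii) it is injective; and (iv) the Galois criterion. Throughout I use the identification $K \cong k((\pi_K))$ and the description of $\text{Aut}_k(K)$ (via Lemma~\ref{contin}) as the continuous substitutions $\pi_K \mapsto \sum_{j\ge1} c_j\pi_K^j$ with $c_1 \in k^{\times}$.

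First I would check membership in $\mathcal{E}_b$. Writing $f_i(X) = X^p - \omega_i\pi_K^m X^{\lambda} - \pi_K$, I compute its Newton polygon with respect to $v_K$: the relevant points are $(0,1)$, $(\lambda,m)$, $(p,0)$, and since $m \ge 1 > 1 - \lambda/p$ the middle point lies strictly above the segment from $(0,1)$ to $(p,0)$. Hence the polygon is a single segment of slope $-1/p$, so $f_i$ is irreducible and $\pi_i$ is a uniformizer of the totally ramified degree-$p$ extension $L_i = K(\pi_i)$, with $v_{L_i}(\pi_K)=p$ and $v_{L_i}(\pi_i)=1$. To pin down the break I use the different: in characteristic $p$ one has $f_i'(X) = -\lambda\omega_i\pi_K^m X^{\lambda-1}$, so $v_{L_i}(\mathfrak{d}_{L_i/K}) = v_{L_i}(f_i'(\pi_i)) = pm + \lambda - 1 = (p-1)(b+1)$, which identifies the ramification break as $b$ (cf. Section~\ref{amanowork}).

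For surjectivity I would produce, for a given $L \in \mathcal{E}_b$, a generator in the stated normal form. Starting from any uniformizer and rescaling by a suitable element of $k^{\times}$ (using that $k$ is perfect to extract $p$-th roots), I arrange $\pi^p = \pi_K(1+\varepsilon)$ with $v_L(\varepsilon)>0$. Reducing powers of $\pi$ through this relation, the fact that the break equals $b$ forces the minimal polynomial's derivative to have valuation $pm+\lambda-1$, hence the first correction term beyond $\pi_K$ to be $\omega\pi_K^m\pi^{\lambda}$ with $\omega \in k^{\times}$ (the exponents being the unique reduced solution of $ps+t = pm+\lambda$ with $0\le t\le p-1$). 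I then iterate, adjusting $\pi$ by higher-order terms to kill all remaining terms, the process converging since $L$ is complete; this gives $L \cong_k K(\pi_i)$ once $\omega$ is matched to a representative in $R$. The crux is to show the class $[\omega] \in k^{\times}/(k^{\times})^{(p-1)b}$ is independent of the choices and is a complete $k$-isomorphism invariant, and I expect this bookkeeping to be the main obstacle. For injectivity, suppose $\sigma: L_i \to L_j$ is a $k$-isomorphism; then $\tau := \sigma|_K \in \text{Aut}_k(K)$, and applying $\sigma$ to $f_i(\pi_i)=0$ shows $\rho := \sigma(\pi_i)$ is a uniformizer of $L_j$ with $\rho^p = \omega_i\,\tau(\pi_K)^m\rho^{\lambda} + \tau(\pi_K)$. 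Writing $\tau(\pi_K) = a\pi_K + \cdots$ and rescaling $\rho$ by $e$ with $e^p = a$, the constant term returns to $\pi_K$ while the middle coefficient becomes $\omega_i\,a^m e^{\lambda-p} = \omega_i\,e^{(p-1)b}$; thus the new parameter differs from $\omega_i$ by a $(p-1)b$-th power. Combined with the well-definedness above this yields $[\omega_i]=[\omega_j]$, hence $i=j$; handling the higher-order terms of $\tau$ and of the rescaling is the same bookkeeping flagged above.

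Finally, for the Galois criterion I would test normality of $f_i$ over $L_i$. A second root has the form $\pi_i + \delta$ with $0 \ne \delta \in L_i$, and subtracting $f_i(\pi_i)=0$ while using $(\pi_i+\delta)^p = \pi_i^p + \delta^p$ gives $\delta^p = \omega_i\pi_K^m\bigl((\pi_i+\delta)^{\lambda} - \pi_i^{\lambda}\bigr)$. The dominant balance $\delta^p \approx \lambda\omega_i\pi_K^m\pi_i^{\lambda-1}\delta$ forces $v_{L_i}(\delta) = b+1$ and, on leading coefficients (using $\pi_K \equiv \pi_i^p$ to leading order), the relation $c^{p-1} = \lambda\omega_i$ in $k^{\times}$ for the residue $c$ of $\delta/\pi_i^{b+1}$. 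So a nonzero $\delta \in L_i$ can exist only if $b+1 \in \mathbb{Z}$, i.e. $b \in \mathbb{N}\smallsetminus p\mathbb{N}$, and $\lambda\omega_i \in (k^{\times})^{p-1}$; conversely, when both hold I would solve the displayed equation for $\delta \in L_i$ by successive approximation (it is an Artin--Schreier-type equation whose linear part dominates), producing a nontrivial $K$-automorphism and hence all $p$ roots. This gives the stated equivalence.
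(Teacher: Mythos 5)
Your parts (i) and (iv) are sound: the Newton polygon together with the different computation $\nu_{L_i}(f_i'(\pi_i)) = pm+\lambda-1 = (p-1)(b+1)$ correctly places $K(\pi_i)/K$ in $\mathcal{E}_b$, and your dominant-balance analysis of a second root $\pi_i+\delta$ is a legitimate direct re-derivation of Theorem~\ref{galois} (Amano's Theorem~3(ii), which the paper simply cites). The bijection itself, however, has two genuine gaps, both hiding in what you call ``bookkeeping.'' For surjectivity, adjusting the uniformizer $\pi$ inside $L$ can never reach the normal form with constant term exactly $\pi_K$: Amano's iteration (the paper's Theorem~\ref{Apoly}, which moreover concludes via Krasner's lemma rather than a naive limit in $L$) only produces $X^p-\omega\pi_K^mX^{\lambda}-u\pi_K$ with $u\in U_{1,K}$, and the $K$-isomorphism class $[A_{\omega,u}^{b}(X)]$ genuinely depends on $u$ --- there are infinitely many distinct extensions in $\mathcal{E}_b$ of each type $\langle\lambda,m,\omega\rangle$, so no manipulation over fixed $K$ can ``kill all remaining terms.'' Normalizing $u=1$ requires moving $\pi_K$ itself by a \emph{wild} automorphism, and the needed solvability of $\varphi(u)\,v_{\varphi}^{h}=1$, i.e.\ surjectivity of $\sigma\mapsto\sigma(u)\bigl(\sigma(\pi_K)/\pi_K\bigr)^{h}$ onto $U_{1,K}$ for $h=\frac{p-\lambda-pm}{p-\lambda}\in\mathbb{Z}_p^{\times}$, is exactly the paper's Lemma~\ref{onto} (resting on transitivity of the Nottingham group $\mathscr{N}$ on prime elements and on $p\nmid\lambda$ to see $h$ is a $p$-adic unit). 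Your $k^{\times}$-rescaling handles only the tame component $l_{\varphi}$; the wild component $v_{\varphi}$, which carries all the $u$-dependence, is absent from your plan.

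The second gap is the invariance statement you yourself flag as the crux: that $\omega$ is well defined and $[\omega]\in k^{\times}/(k^{\times})^{(p-1)b}$ is a complete invariant. Half of this is not bookkeeping but a theorem --- Amano's Theorem~1, the paper's Theorem~\ref{sametype}: $K$-isomorphic Eisenstein polynomials of degree $p$ have the same type, so for fixed $\pi_K$ the unit $\omega$ itself is a $K$-isomorphism invariant. Without it your injectivity computation proves nothing: you match leading coefficients after the particular rescaling $\rho=e\rho'$ with $e^p=a$, but a priori a different choice of uniformizer of $L_j$, or the wild parts of $\tau$ and of the rescaling that you defer, could alter the leading unit of the middle coefficient. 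The paper resolves precisely this with the exact substitution $X=l_{\varphi}^{1/p}v_{\varphi}^{m/(p-\lambda)}Z$ (Theorem~\ref{specific}), which shows the wild data affects only $u$ while the tame data multiplies $\omega$ by $l_{\varphi}^{(p-1)b/p}\in(k^{\times})^{(p-1)b}$, and then invokes Theorem~\ref{sametype} to conclude $\omega_j=\omega_i\,l_{\varphi}^{(p-1)b/p}$. Proving type invariance from scratch means tracking an arbitrary change of uniformizer $\pi\mapsto\sum a_i\pi^i$ through the minimal polynomial, which is a real argument, not a deferred detail. So your skeleton is right and your leading-order formulas agree with Theorem~\ref{specific}, but the proposal is missing the two load-bearing inputs: Lemma~\ref{onto} (wild surjectivity in $u$) and Theorem~\ref{sametype} (type invariance).
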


\begin{cor} \label{number}
Let $b \in \frac{1}{p-1} \cdot (\mathbb{N}
\smallsetminus p \mathbb{N})$ and assume that $|k| = q <
\infty$.  Then
\[|\mathcal{S}_{b}| = \gcd(q - 1 , ( p - 1) b).\]
Furthermore, if $b \in \mathbb{N} \smallsetminus p
\mathbb{N}$ then
\begin{align*}
\left|\mathcal{S}_{b}^g \right| & =
\gcd \left(\frac{q-1}{p-1}, b \right) \\
|\mathcal{S}_{b}^{ng}| & =
(p-2) \cdot \gcd \left(\frac{q-1}{p-1}, b \right).
\end{align*}
\end{cor}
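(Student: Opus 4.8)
The plan is to read off every count from the bijection $R \to \mathcal{S}_{b}$ of Theorem~\ref{theorem}, reducing each one to elementary arithmetic in the cyclic group $k^{\times}$.

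First I would establish the formula for $|\mathcal{S}_{b}|$. The bijection gives $|\mathcal{S}_{b}| = |R| = [k^{\times} : (k^{\times})^{(p-1)b}]$, and here $(p-1)b = (m-1)p + \lambda$ is a positive integer, so the index is meaningful. Since $k^{\times}$ is cyclic of order $q - 1$, the image of the $n$-th power map is a subgroup of index $\gcd(q-1, n)$ (its kernel has order $\gcd(q-1,n)$); taking $n = (p-1)b$ yields $|\mathcal{S}_{b}| = \gcd(q-1, (p-1)b)$.

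Next, for $b \in \mathbb{N} \smallsetminus p\mathbb{N}$ I would count the Galois classes. By the last sentence of Theorem~\ref{theorem} the class of $K(\pi_i)/K$ is Galois precisely when $\lambda\omega_i \in (k^{\times})^{p-1}$, so $|\mathcal{S}_{b}^g|$ is the number of $\omega_i \in R$ meeting this condition. The step I would single out as the crux is the observation that $(p-1) \mid (p-1)b$, whence $(k^{\times})^{(p-1)b} \subseteq (k^{\times})^{p-1}$. This containment makes the Galois condition depend only on the coset $\omega_i(k^{\times})^{(p-1)b}$, and it shows that the coset $\lambda^{-1}(k^{\times})^{p-1}$ is a disjoint union of exactly $[(k^{\times})^{p-1} : (k^{\times})^{(p-1)b}]$ such cosets; hence $|\mathcal{S}_{b}^g|$ equals this index.

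Finally I would evaluate the index and subtract. Because $q$ is a power of $p$ one has $p - 1 \mid q - 1$; writing $M = (q-1)/(p-1)$, the identities $\gcd(q-1, p-1) = p-1$ and $\gcd(q-1, (p-1)b) = (p-1)\gcd(M, b)$ give
\[ |\mathcal{S}_{b}^g| = \frac{\gcd(q-1, (p-1)b)}{\gcd(q-1, p-1)} = \gcd\left(\frac{q-1}{p-1}, b\right), \]
while $|\mathcal{S}_{b}| = (p-1)\gcd(M, b)$, so $|\mathcal{S}_{b}^{ng}| = |\mathcal{S}_{b}| - |\mathcal{S}_{b}^g| = (p-2)\gcd\left(\frac{q-1}{p-1}, b\right)$. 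I do not expect a genuine obstacle here; the only subtlety worth flagging is the containment $(k^{\times})^{(p-1)b} \subseteq (k^{\times})^{p-1}$ used above, which legitimizes passing the Galois condition to cosets and converting the count into an index, together with the fact $p - 1 \mid q - 1$ that forces $\gcd(q-1, p-1) = p-1$.
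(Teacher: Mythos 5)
Your proposal is correct and follows essentially the same route as the paper: the paper's proof simply cites the two index formulas $|k^{\times}/(k^{\times})^{(p-1)b}| = \gcd(q-1,(p-1)b)$ and $|(k^{\times})^{p-1}/(k^{\times})^{(p-1)b}| = \gcd\left(\frac{q-1}{p-1},b\right)$ together with Theorem~\ref{theorem}, and your argument just supplies the elementary verifications (cyclicity of $k^{\times}$, the containment $(k^{\times})^{(p-1)b} \subseteq (k^{\times})^{p-1}$ making the Galois criterion a coset condition, and $p-1 \mid q-1$) that the paper leaves implicit. No gaps; the details you flag as the crux are exactly the ones hidden in the paper's one-line proof.
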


\begin{proof}
This follows from Theorem~\ref{theorem} and the formulas
\begin{alignat*}{2}
|k^{\times} / (k^{\times})^{(p-1) b}| & = 
\gcd( q - 1 , ( p - 1 ) b), \\
|(k^{\times})^{p-1} / (k^{\times})^{(p-1) b}| & = 
\gcd\left(\frac{q - 1}{p-1} , b \right) && \text{for }b \in
\mathbb{N} \smallsetminus p \mathbb{N}.
\end{alignat*}
\end{proof}

The proof of Theorem~\ref{theorem} relies heavily on
the work of Amano, who showed in \cite{amano} that every
degree-$p$ extension of a local field of characteristic
0 is generated by a root of an Eisenstein
polynomial with a special form, which we call an
\textit{Amano polynomial} (see Definition~\ref{poly}).
In Section~\ref{amanowork} we show how Amano's results
can be adapted to the characteristic-$p$ setting.  In
Section~\ref{action} we prove Theorem~\ref{theorem} by
computing the orbits of the action of $\text{Aut}_k(K)$
on the set of Amano polynomials over $K$.

\section{Amano polynomials in characteristic
\texorpdfstring{$p$}{p}}
\label{amanowork}

Let $F$ be a finite extension of the $p$-adic field
$\mathbb{Q}_p$ and let $E/F$ be a totally ramified
extension of degree $p$.  In \cite{amano}, Amano
constructs an Eisenstein polynomial $g(X)$ over $F$ with
at most 3 terms such that $E$ is generated over $F$ by a
root of $g(X)$.  In this section we reproduce a part of
Amano's construction in characteristic $p$.  We
associate a family of 3-term Eisenstein polynomials to
each ramified separable extension of $L/K$ of degree
$p$, but we don't choose representatives for these
families.  Many of the proofs from \cite{amano} remain
valid in this new setting.

Let $K$ be a local field of characteristic $p$ with
perfect residue field $k$.  Let $K_s$ be a separable
closure of $K$ and let $\nu_K$ be the valuation of $K_s$
normalized so that $\nu_K(K^{\times})={\mathbb Z}$.
Fix a prime element $\pi_K$
for $K$; since $k$ is perfect we may identify $K$ with
$k((\pi_K))$.  Let $U_K$ denote the group of units of
$K$, and let $U_{1,K}$ denote the subgroup of 1-units.
If $u \in U_{1,K}$ and $\alpha \in
\mathbb{Z}_{p}$ is a $p$-adic integer then $u^{\alpha}$
is defined as a limit of positive integer powers of $u$.
This applies in particular when $\alpha$ is a rational
number whose denominator is not divisible by $p$.

Let $L/K$ be a finite totally ramified subextension of
$K_s/K$ and let $\nu_L$ be the valuation of $K_s$
normalized so that $\nu_L(L^{\times})={\mathbb Z}$.  Let
$\pi_L$ be a prime element for $L$ and let
$\sigma:L\rightarrow K_s$ be a $K$-embedding of $L$ into
$K_s$, such that $\sigma\not=\text{id}_L$.  We define
the ramification number of $\sigma$ to be
$\nu_L(\sigma(\pi_L)-\pi_L)-1$.  It is easily seen that
this definition does not depend on the choice of
$\pi_L$.  We say that $b$ is a (lower) ramification
break of the extension $L/K$ if $b$ is the ramification
number of some nonidentity $K$-embedding of $L$ into
$K_s$.

Suppose $L/K$ is a separable totally ramified extension
of degree $p$.  Then Lemma 1 of \cite{amano} shows that
$L/K$ has a unique ramification break.  Every prime
element $\pi_L$ of $L$ is a root of an Eisenstein
polynomial
\[
f(X) = X^p - \sum_{i=0}^{p-1} c_i X^i
\]
over $K$, with $\nu_K(c_0) = 1$ and $\nu_K(c_i) \geq 1$
for $1 \leq i \leq p-1$.  Let $\pi_L' \not = \pi_L$ be a
conjugate of $\pi_L$ in $K_{s}$.  Then the ramification
break of $L/K$ is given by
\[
b = \nu_L\left(\frac{\pi_L'}{\pi_L} - 1\right).
\]

Since $L/K$ is separable, we have $c_i \not= 0$ for some
$i$ with $1 \le i \le p-1$.  Therefore
\[
m = \min\{\nu_K(c_1), \ldots, \nu_K(c_{p-1})\}
\]
is finite.  Let $\lambda$ be minimum such that
$\nu_K(c_{\lambda}) = m$ and let $\omega \in k^{\times}$
satisfy $c_{\lambda} \equiv \omega \pi_K^m
\pmod{\pi_K^{m+1}}$.  We say that the Eisenstein
polynomial $f(X)$ is of type $\langle \lambda,m,\omega
\rangle$.  Note that while $\omega$ depends on the
choice of $\pi_K$, the positive integers $m$ and
$\lambda$ do not.  If $f(X)$ is of type $\langle
\lambda,m,\omega \rangle$ then by Lemma~1 of
\cite{amano} the ramification break $b$ of $L/K$ is
given by
\begin{equation} \label{break}
b = \frac{(m-1)p + \lambda}{p-1}.
\end{equation}
Conversely, given $b \in \frac{1}{p-1} \cdot
(\mathbb{N} \smallsetminus p \mathbb{N})$, equation
(\ref{break}) uniquely determines $m$ and $\lambda$, and
we can easily construct Eisenstein polynomials of type
$\langle \lambda,m,\omega \rangle$ for every $\omega \in
k^{\times}$.

For Eisenstein polynomials $f(X), g(X) \in K[X]$, write
$f(X) \sim g(X)$ if there is a $K$-isomorphism
\[
K[X]/(f(X)) \cong K[X]/(g(X)).
\]
Then $\sim$ is an equivalence relation on Eisenstein
polynomials over $K$.

\begin{thm} \label{sametype}
Suppose $f(X),g(X) \in K[X]$ are Eisenstein polynomials
of degree $p$ such that $f(X) \sim g(X)$.  Then $f(X)$
and $g(X)$ are of the same type.
\end{thm}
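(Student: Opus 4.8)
The plan is to treat the three components $\lambda$, $m$, $\omega$ of the type separately, extracting $\lambda$ and $m$ from the ramification break and then reducing the statement about $\omega$ to a change-of-uniformizer computation.

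First I would record that $\lambda$ and $m$ are already determined by the break, so that this part requires no comparison of the polynomials themselves. By (\ref{break}) an Eisenstein polynomial of type $\langle \lambda, m, \omega\rangle$ generates an extension with break $b = ((m-1)p + \lambda)/(p-1)$, and since $1 \le \lambda \le p-1$ this relation recovers $m$ and $\lambda$ uniquely from $b$. The relation $f \sim g$ means precisely that $K[X]/(f)$ and $K[X]/(g)$ are $K$-isomorphic, so they share all arithmetic invariants; in particular they have the same (unique) ramification break. Hence $f$ and $g$ have the same $\lambda$ and the same $m$, and it only remains to prove that their residues $\omega$ coincide.

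For the $\omega$-part I would first move everything into a single field. Fix a $K$-isomorphism $\phi : K[X]/(f) \to K[X]/(g)$ and set $M = K[X]/(g)$. If $\pi$ denotes the image of $X$ in $K[X]/(f)$, then $\phi(\pi)$ is a root of $f$ in $M$ (since $\phi$ fixes the coefficients of $f$) and is a prime element of $M$; likewise the image $\rho$ of $X$ in $M$ is a prime element whose minimal polynomial is $g$. Thus $\phi(\pi)$ and $\rho$ are two prime elements of the one field $M$, with minimal polynomials $f$ and $g$ of types $\langle\lambda,m,\omega_f\rangle$ and $\langle\lambda,m,\omega_g\rangle$, and the theorem reduces to the assertion that the residue $\omega$ attached to the minimal polynomial of a prime element of $M$ is independent of the chosen prime element.

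The tool I would use to see this is the different. Writing $\nu_M$ for the valuation of $M$ with $\nu_M(M^\times)=\mathbb{Z}$ and differentiating $f(X) = X^p - \sum_{i=0}^{p-1} c_i X^i$ (recall $pX^{p-1} = 0$ in characteristic $p$), one gets $f'(\pi) = -\sum_{i=1}^{p-1} i c_i \pi^{i-1}$, whose terms have valuations $p\,\nu_K(c_i) + (i-1)$; the minimum is attained uniquely at $i = \lambda$, so $\nu_M(f'(\pi)) = d := (p-1)(b+1) = pm+\lambda-1$ and $f'(\pi) \equiv -\lambda \omega_f \pi_K^m \pi^{\lambda-1}$ modulo terms of valuation exceeding $d$, with the analogous statement for $g'(\rho)$. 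Since $f'(\pi)$ and $g'(\rho)$ both generate the different $\mathfrak{d}_{M/K}$, they differ by a unit of $M$, and I would compare their leading coefficients by writing $\rho = b_1 \pi + (\text{higher order})$ with $b_1 \in K$ a unit and using $\sigma\rho - \rho \equiv b_1(\sigma\pi - \pi)$ to leading order, so that $g'(\rho) = \prod_{\sigma \neq 1}(\rho - \sigma\rho)$ and $f'(\pi) = \prod_{\sigma\neq1}(\pi - \sigma\pi)$ can be matched factor by factor over the nontrivial embeddings $\sigma$. The main obstacle is exactly this last comparison: one must control all the unit factors produced in passing from $\pi$ to $\rho$, and verify that after accounting for the change in $\pi^{\lambda-1}$ and in the expression of $\pi_K$ relative to the two uniformizers, the residue $\omega$ is genuinely unchanged. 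This bookkeeping of the leading residue under a change of prime element is where the real content of the theorem lies, and I would expect to spend most of the effort there.
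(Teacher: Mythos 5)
Your extraction of $\lambda$ and $m$ from the ramification break is correct, and so is your different computation: $\nu_M(f'(\pi))=pm+\lambda-1=(p-1)(b+1)$, with leading term $-\lambda\omega_f\pi_K^m\pi^{\lambda-1}$, the minimum being attained uniquely at $i=\lambda$ because the valuations $p\nu_K(c_i)+i-1$ are distinct modulo $p$. But the step you postpone as ``bookkeeping'' is not bookkeeping; it is the entire content of the theorem, and in the form you have set it up it cannot close. Carry your comparison out: from $\rho\equiv b_1\pi$ modulo higher-order terms one gets $\rho-\sigma\rho\equiv b_1(\pi-\sigma\pi)$ for each nontrivial embedding $\sigma$, hence $g'(\rho)\equiv b_1^{p-1}f'(\pi)$ and $\rho^{\lambda-1}\equiv b_1^{\lambda-1}\pi^{\lambda-1}$ to leading order, so matching leading residues yields $\omega_g=\omega_f\,\bar{b}_1^{\,p-\lambda}$, where $\bar{b}_1\in k^{\times}$ is the residue of $b_1$ --- and nothing you have introduced controls $\bar{b}_1$. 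Indeed your intermediate reduction, that the residue $\omega$ attached to the minimal polynomial of a prime element of $M$ is independent of the chosen prime element, is false as stated: if $\pi$ is a root of $f(X)=X^p-\sum_{i=0}^{p-1}c_iX^i$ and $a\in U_K$ is a Teichm\"uller unit, then $\rho=a\pi$ is a prime element of the same field with minimal polynomial $a^pf(X/a)=X^p-\sum_{i=0}^{p-1}a^{p-i}c_iX^i$, whose $\omega$ is $\bar{a}^{\,p-\lambda}$ times that of $f$. (Take $p=3$, $k=\mathbb{F}_9$, $f=X^3-\pi_KX-\pi_K$, and $a$ of order $8$: the types $\langle1,1,1\rangle$ and $\langle1,1,a^2\rangle$ differ.)

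The missing idea is the constant term, which your different computation never touches. Up to sign, $c_0(f)=\mathrm{N}_{M/K}(\pi)$ and $c_0(g)=\mathrm{N}_{M/K}(\rho)$, and since the norm of a totally ramified degree-$p$ extension induces $x\mapsto x^p$ on residues, the ratio of the two constant-term residues equals $\bar{b}_1^{\,p}$. So your argument closes precisely when the constant terms of $f$ and $g$ have the same residue: then $\bar{b}_1^{\,p}=1$ forces $\bar{b}_1=1$ because $k$ is perfect, and your leading-term identity gives $\omega_g=\omega_f$. That normalization is exactly what is in force everywhere the paper actually invokes Theorem~\ref{sametype}: it is applied only to Amano polynomials $A_{\omega,u}^{b}$, whose constant term $u\pi_K$ has $u\in U_{1,K}$; it is likewise built into Amano's setting, to whose Theorem~1 the paper's one-line proof defers without giving a direct argument. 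So a self-contained proof along your lines is a genuinely different and worthwhile route, but to be correct it must incorporate the norm/constant-term comparison together with a hypothesis normalizing $c_0$ (or a restriction to $\mathscr{P}_b$); for arbitrary Eisenstein pairs the $\omega$-component is determined only up to the $(k^{\times})^{p-\lambda}$-ambiguity exhibited above, which is exactly what the deferred computation would have uncovered.
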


\begin{proof}
The proof of Theorem~1 of \cite{amano} applies here,
except that in characteristic $p$ we don't have to
consider polynomials of type $\langle0\rangle$.
\end{proof}

Henceforth we say that an extension $L/K$ has type
$\langle \lambda,m,\omega \rangle$ if $L/K$ is
$K$-isomorphic to $K[X]/(f(X))$ for some Eisenstein
polynomial $f(X)$ of type $\langle \lambda,m,\omega
\rangle$.

\begin{thm} \label{galois}
Let $L/K$ be an extension of type $\langle \lambda,m,\omega
\rangle$. Then $L/K$ is Galois if and only $b =
\frac{(m-1)p + \lambda}{p-1}$ is
an integer and $\lambda \omega \in (k^{\times})^{p-1}$.
\end{thm}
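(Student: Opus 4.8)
My plan is to reduce the Galois question to the solvability over the residue field $k$ of a single congruence. Fix an Eisenstein polynomial $f(X)=X^p-\sum_{i=0}^{p-1}c_iX^i$ of type $\langle\lambda,m,\omega\rangle$ with root $\pi=\pi_L$ generating $L$, and recall from (\ref{break}) that $(p-1)(b+1)=mp+\lambda-1$. I begin with two reductions. First, since $c_0=N_{L/K}(\pi_L)$ and the norm of a unit reduces modulo $\mathfrak{m}_K$ to its $p$-th power, rescaling $\pi_L$ by a unit multiplies the residue of $c_0/\pi_K$ by an arbitrary element of $(k^{\times})^p=k^{\times}$; as $k$ is perfect I may therefore assume $c_0\equiv\pi_K\pmod{\pi_K^2}$, and by Theorem~\ref{sametype} this does not disturb the type. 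Second, if $L/K$ is Galois with nontrivial automorphism $\sigma$, then $\sigma(\pi)-\pi\in L$ forces $b+1=\nu_L(\sigma(\pi)-\pi)\in\mathbb{Z}$; hence $b\in\mathbb{Z}$ is necessary, and I assume it from now on, so that $\pi^{b+1}\in L$.

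Next I would introduce the auxiliary polynomial $F(Y)=f(\pi+\pi^{b+1}Y)\in L[Y]$, whose roots are the normalized differences $(\pi^{(i)}-\pi)/\pi^{b+1}$ as $\pi^{(i)}$ runs over the roots of $f$. Since $f(\pi)=0$ I can write $F(Y)=Y\,G(Y)$ with $G$ of degree $p-1$ and nonzero roots $Y_i=(\pi^{(i)}-\pi)/\pi^{b+1}$ for $\pi^{(i)}\neq\pi$. The leading coefficient of $G$ is $\pi^{p(b+1)}$, and since $\nu_L(f'(\pi))=(p-1)(b+1)$ its constant term $f'(\pi)\pi^{b+1}$ has the same valuation $p(b+1)$; dividing by $\pi^{p(b+1)}$ produces a monic $\Phi(Y)\in\mathcal{O}_L[Y]$ of degree $p-1$ with the same nonzero roots. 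The crux of the proof is to show that the reduction is
\[
\overline{\Phi}(Y)=Y^{p-1}-\lambda\omega\in k[Y].
\]
The constant term comes from the dominant part $-\lambda c_{\lambda}\pi^{\lambda-1}$ of $f'(\pi)$ and reduces to $-\lambda\omega$ once $c_{\lambda}\equiv\omega\pi_K^m$ and $c_0\equiv\pi_K$ are used, while the leading term is $1$. The content of the claim is that each intermediate coefficient, that of $Y^{j-1}$ for $2\le j\le p-1$, lies in $\mathfrak{m}_L$: it is a sum of terms of valuation $p(\nu_K(c_i)-m)+(i-\lambda)+b(j-1)$, which is positive for $j\ge2$. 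I expect the verification of this estimate to be the main obstacle; the one delicate case is $i<\lambda$, where positivity relies on the minimality of $\lambda$, which forces $\nu_K(c_i)\ge m+1$.

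With the reduction in hand both implications are short. If $\lambda\omega\in(k^{\times})^{p-1}$, then $Y^{p-1}-\lambda\omega$ has a root $\overline{\alpha}\in k^{\times}$, necessarily simple; Hensel's lemma over the complete ring $\mathcal{O}_L$ lifts it to $Y_0\in\mathcal{O}_L$ with $\Phi(Y_0)=0$, so $\pi'=\pi+\pi^{b+1}Y_0\in L$ is a root of $f$ distinct from $\pi$, and $\pi\mapsto\pi'$ extends to a nontrivial $K$-automorphism of $L$, whence $L/K$ is Galois. Conversely, if $L/K$ is Galois then $b\in\mathbb{Z}$ by the reduction above, and a generator $\sigma$ of the Galois group yields the root $Y_1=(\sigma(\pi)-\pi)/\pi^{b+1}\in\mathcal{O}_L$ of $\Phi$ with $\nu_L(Y_1)=0$; reducing gives $\overline{Y_1}\in k^{\times}$ with $\overline{Y_1}^{\,p-1}=\lambda\omega$, so $\lambda\omega\in(k^{\times})^{p-1}$. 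Combining the two directions with the necessity of $b\in\mathbb{Z}$ proves the theorem.
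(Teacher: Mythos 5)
Your argument is correct and is in substance the paper's own proof: the paper proves this theorem simply by citing Theorem~3(ii) of Amano, whose method is precisely your route---normalize $c_0\equiv\pi_K\pmod{\pi_K^2}$ using the norm map and perfectness of $k$ (type preserved by Theorem~\ref{sametype}), substitute $f(\pi+\pi^{b+1}Y)$, and reduce the resulting monic degree-$(p-1)$ polynomial to $Y^{p-1}-\lambda\omega$ over $k$, finishing with Hensel's lemma in one direction and the root $(\sigma(\pi)-\pi)/\pi^{b+1}$ in the other (the argument is in fact cleaner in characteristic $p$, where $(\pi+\pi^{b+1}Y)^p=\pi^p+\pi^{p(b+1)}Y^p$ contributes no cross terms). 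The estimate you flag as the main obstacle does hold exactly as you predict: using $p(b+1)=b+mp+\lambda$, the coefficient of $Y^{j-1}$ is a sum of terms of valuation $p(\nu_K(c_i)-m)+(i-\lambda)+b(j-1)$, which for $2\le j\le p-1$ is at least $b(j-1)+1>0$ when $i\ge\lambda$ and at least $2+b(j-1)>0$ when $i<\lambda$ (since minimality of $\lambda$ gives $\nu_K(c_i)\ge m+1$ and $i-\lambda\ge 2-p$), so nothing is missing.
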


\begin{proof}
The proof of Theorem~3(ii) of \cite{amano} applies
without change.
\end{proof}

\begin{thm} \label{Apoly}
Suppose $L/K$ is an extension of type $\langle
\lambda,m,\omega \rangle$.
Then there exists a prime element $\pi_L \in L$ which is
a root of a polynomial
\[
A_{\omega,u}^{b}(X) = X^p - \omega \pi_K^m
X^{\lambda} - u \pi_K
\]
for some $u \in U_{1,K}$.
\end{thm}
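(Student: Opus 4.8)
The plan is to follow the reduction strategy of Amano, taking advantage of the simplifications available in characteristic $p$. Since $L/K$ has type $\langle\lambda,m,\omega\rangle$, I may fix a prime element $\pi$ of $L$ whose minimal polynomial $f(X)=X^p-\sum_{i=0}^{p-1}c_iX^i$ is Eisenstein of type $\langle\lambda,m,\omega\rangle$, so that
\[
\pi^p = c_0 + c_\lambda\pi^\lambda + \sum_{i\neq 0,\lambda} c_i\pi^i,
\]
with $\nu_K(c_0)=1$, $\nu_K(c_\lambda)=m$, and, by minimality of $\lambda$, every other middle monomial $c_i\pi^i$ ($i\neq 0,\lambda$) of strictly larger $\nu_L$-valuation than $c_\lambda\pi^\lambda$. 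The goal is to replace $\pi$ by a new prime element whose minimal polynomial retains only the constant term and the $X^\lambda$ term, with the constant a $1$-unit times $\pi_K$.

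The heart of the argument is a successive-approximation scheme. I would show that whenever the relation for a prime element contains an \emph{unwanted} monomial---any $c_i\pi^i$ with $i\neq 0,\lambda$, or the part of $c_\lambda\pi^\lambda$ lying beyond its leading term---of least $\nu_L$-valuation $N$, there is a modification $\pi\mapsto\pi(1+\epsilon)$ or $\pi\mapsto\pi+\delta$, with $\epsilon,\delta$ of suitably high valuation, after which the least $\nu_L$-valuation of the unwanted part is strictly larger than $N$ while the constant and the $X^\lambda$ terms are unchanged modulo this higher valuation. The characteristic-$p$ identities $(x+y)^p=x^p+y^p$ and $(1+\epsilon)^p=1+\epsilon^p$ make the effect of these substitutions transparent: passing to $\pi'=\pi(1+\epsilon)$ or $\pi'=\pi+\delta$ alters the $p$-th-power relation only by the single term $\pi^p\epsilon^p$, respectively $\delta^p$, together with higher-order adjustments from re-expanding the right-hand side in $\pi'$, so the leading correction can be solved for explicitly. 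Iterating and using the completeness of $L$, the corresponding sequence of prime elements is Cauchy and converges to a prime $\pi^\ast$ whose minimal polynomial is exactly $X^p-aX^\lambda-c_0'$ for some $a$ with $\nu_K(a)=m$ and some $c_0'$ with $\nu_K(c_0')=1$.

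Finally, I would normalize the constant. Scaling $\pi^\ast\mapsto\zeta\pi^\ast$ by a constant $\zeta\in k^\times$ multiplies $c_0'$ by $\zeta^p$ and $a$ by $\zeta^{p-\lambda}$; since $k$ is perfect, I can solve $\bar\zeta^{\,p}=\overline{(c_0'/\pi_K)}^{-1}$ uniquely, making the constant a $1$-unit times $\pi_K$. This yields a prime element $\pi_L$ with minimal polynomial $X^p-\omega''\pi_K^m X^\lambda-u\pi_K$ for some $\omega''\in k^\times$ and $u\in U_{1,K}$. This polynomial defines the same extension as $f$, hence is $\sim$-equivalent to it, so Theorem~\ref{sametype} forces it to be of type $\langle\lambda,m,\omega\rangle$; therefore $\omega''=\omega$ and $\pi_L$ is a root of $A_{\omega,u}^{b}(X)$.

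I expect the main obstacle to be the construction in the second paragraph: exhibiting, at each stage and for each residue of $N$ modulo $p$, an explicit valuation-raising substitution that does not disturb the already-normalized lower-order terms, and then verifying convergence. The delicate point is that in characteristic $p$ the formal derivative $pX^{p-1}$ of $X^p$ vanishes, so the only linear sensitivity available for solving the corrections comes from the $X^\lambda$ term---which is nonzero precisely because $L/K$ is separable, forcing $1\le\lambda\le p-1$ and hence $\lambda\not\equiv 0$ in $k$. Keeping track of which of the two substitution shapes to use, and checking that each correction lies in $L$ with the required valuation, is where Amano's bookkeeping must be carried over with care.
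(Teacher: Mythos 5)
Your proposal is correct in overall strategy but takes a genuinely different route from the paper at the decisive final step. The paper (following Amano's Theorem~4) runs the successive approximation only \emph{finitely} far: it produces a prime element $\pi\in L$ with $\nu_L(\phi(\pi))>p(\lambda+1)$, where $\phi(\alpha)=\alpha^p-\omega\pi_K^m\alpha^\lambda-{\rm N}_{L/K}(\alpha)$, and then invokes Krasner's Lemma to replace $\pi$ by an exact root $\pi^{(j)}$ of $A_{\omega,u}^b(X)$, which is shown to generate $L$ by a degree count; the constant term is $u\pi_K={\rm N}_{L/K}(\pi)$ with $u\in U_{1,K}$ arranged during the iteration, and $\omega$ is wired into $\phi$ from the outset. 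You instead push the iteration to the limit using completeness of $L$, never invoking Krasner, and then normalize the constant by a Teichm\"uller scaling, recovering the exact $\omega$ via Theorem~\ref{sametype} --- that endgame is a legitimate and rather tidy alternative, and your identification of the linear sensitivity coming from the $X^\lambda$ term (nonvanishing because $\lambda\not\equiv 0 \bmod p$ by separability) is exactly the right mechanism. What Krasner buys the paper is that only finitely many levels of bookkeeping need checking; your route needs a valuation-raising move at \emph{every} level $N$, and there is one genuinely critical level your sketch does not flag: when $b\in\mathbb{N}$, at $N=p(b+1)$ the contribution of $\delta^p$ and the linear contribution $-\lambda\omega\pi_K^m\pi^{\lambda-1}\delta$ land at the same level, so the leading-coefficient equation takes the Artin--Schreier shape $\theta^p-A\theta=e$, which is not in general solvable in $k$. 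The escape is that the offending term sits at a level divisible by $p$ and can be absorbed into the floating constant term, i.e.\ into the eventual $u\in U_{1,K}$ --- which is exactly why $u$ survives as a parameter and cannot be normalized to $1$. Your framework does let the constant float (you only pin its valuation), so the obstruction is avoidable, but your blanket claim that ``the leading correction can be solved for explicitly'' glosses over this point; a complete write-up of your infinite-iteration variant must treat it, whereas the paper's finite-iteration-plus-Krasner argument sidesteps it entirely.
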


\begin{proof}
The proof of Theorem~4 of \cite{amano} applies here,
except that we don't have to consider extensions of type
$\langle0\rangle$.  Briefly, one defines a function
$\phi:L\rightarrow K$ by
\[
\phi(\alpha)=\alpha^p-\omega \pi_K^m \alpha^n -
{\rm N}_{L/K}(\alpha),
\]
where ${\rm N}_{L/K}$ is the norm from $L$ to $K$.
Using an iterative procedure one gets a prime element
$\pi$ in $L$ such that $\nu_L(\phi(\pi)) >
p(\lambda+1)$ and ${\rm N}_{L/K}(\pi) = u\pi_K$ for some
$u \in U_{1,K}$.  Let $\pi^{(1)},\dots,\pi^{(p)}\in
K_s$ be the roots of $A_{\omega, u}^b(X)$.  Then
\begin{equation}
\phi(\pi) = A_{\omega, u}^b(\pi) = \prod_{i=1}^p (\pi - \pi^{(i)}),
\end{equation}
so we have
\begin{equation}
\sum_{i=1}^p \nu_L(\pi - \pi^{(i)}) = \nu_L(\phi(\pi)) >
p(\lambda+1).
\end{equation}
Hence $\nu_L(\pi - \pi^{(j)}) > \lambda + 1$ for some
$j$, so we get $L \subset K(\pi^{(j)})$ by Krasner's
Lemma.  Since $[K(\pi^{(j)}):K]=[L:K]=p$, it follows
that $L = K(\pi^{(j)})$.  Therefore $\pi_L = \pi^{(j)}$
satisfies the conditions of the theorem.
\end{proof}

\begin{define} \label{poly}
We say that $A_{\omega,u}^{b}(X)$ is an {\em Amano
polynomial} over $K$ with ramification break $b$.
\end{define}

Let $b = \frac{(m-1) p + \lambda}{p - 1}$ with $1 \le
\lambda \le p-1$.  We denote the set of Amano
polynomials over $K$ with ramification break $b$ by
\[
\mathscr{P}_{b} = \{X^p - \omega \pi_K^m X^{\lambda}
- u \pi_K : \omega \in k^{\times}, \; u \in U_{1,K} \}.
\]
Let $\mathscr{P}_{b} / {\sim}$ denote the set of
equivalence classes of $\mathscr{P}_{b}$ with respect to
$\sim$.  For $f(X) \in \mathscr{P}_{b}$, we denote the
equivalence class of $f(X)$ by $[f(X)]$.  It follows
from Theorem~\ref{Apoly} that these
equivalence classes are in one-to-one correspondence
with the elements of $\mathcal{E}_b$.

\section{The action of
\texorpdfstring{Aut$_k(K)$}{Aut(K)} on extensions}
\label{action}

In this section we show how Aut$_k(K)$ acts on the set
of equivalence classes of Amano polynomials with
ramification break $b$.  We determine the orbits of this
action, and give a representative for each orbit.  This
allows us to construct representatives for the elements
of $\mathcal{S}_b$, and leads to the proof of
Theorem~\ref{theorem}.

The following lemma is certainly well-known (see, for
instance, the answers to \cite{overflow}), but we could
find no reference for it.

\begin{lemma} \label{contin}
Let $L_1$ and $L_2$ be local fields.  Assume that $L_1$
and $L_2$ have the same residue field $k$, and that $k$
is a perfect field of characteristic $p$.  Let
$\sigma:L_1\rightarrow L_2$ be a field isomorphism.
Then $\nu_{L_2}\circ\sigma=\nu_{L_1}$.
\end{lemma}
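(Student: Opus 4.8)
The plan is to avoid analyzing the map $\sigma$ coordinate-by-coordinate and instead to compare two valuations living on the single field $L_2$. I would transport the valuation of $L_1$ through $\sigma$ by setting $w := \nu_{L_1}\circ\sigma^{-1}$. This is a discrete valuation on $L_2$, and since $\sigma$ is a field isomorphism it carries the completeness of $(L_1,\nu_{L_1})$ across, so $(L_2,w)$ is again a complete discretely valued field (in fact isomorphic to $L_1$ as a valued field). Thus $L_2$ carries two complete discrete valuations, its own $v:=\nu_{L_2}$ and the transported $w$; the lemma is precisely the assertion that $v=w$, since this unwinds to $\nu_{L_2}\circ\sigma=\nu_{L_1}$.

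To prove $v=w$ I would invoke the classical theorem of F.\,K.\ Schmidt (as in Engler and Prestel's \emph{Valued Fields}): a field that is henselian with respect to two independent, i.e.\ inequivalent, nontrivial valuations is separably closed. Both $v$ and $w$ are complete, hence henselian, so if they were independent then $L_2$ would be separably closed. But $L_2$ is not separably closed: for any prime $\ell\neq p$ a uniformizer of $L_2$ has no $\ell$-th root (its value $1$ is not divisible by $\ell$ in $\mathbb{Z}$), so $L_2^{\times}\neq(L_2^{\times})^{\ell}$, whereas in a separably closed field of characteristic $\neq\ell$ every element is an $\ell$-th power. Therefore $v$ and $w$ cannot be independent.

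Since both $v$ and $w$ have rank one, being dependent forces them to be equivalent, i.e.\ to have the same valuation ring. I would then upgrade equivalence to equality: both are normalized discrete valuations surjecting onto $\mathbb{Z}$ (the value group of $w$ is $\nu_{L_1}(L_1^{\times})=\mathbb{Z}$), and a normalized discrete valuation is determined by its valuation ring, so $v=w$. This is exactly the desired identity $\nu_{L_2}\circ\sigma=\nu_{L_1}$.

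The main obstacle is choosing the right abstract input rather than any single calculation. The naive attempt to give $\mathcal{O}_{L}$ a purely field-theoretic description (for instance, $x\in\mathfrak{m}$ iff $1+x$ is an $\ell$-th power for every $\ell\neq p$) breaks down over large residue fields such as $k=\overline{\mathbb{F}}_p$, where every unit is already infinitely divisible away from $p$; so a direct characterization of the valuation ring is delicate. Routing through F.\,K.\ Schmidt's theorem bypasses this entirely, and it is worth noting that the only features of the setting that the argument uses are discreteness (giving a non-divisible value group) and completeness (giving henselianity), so the residue-characteristic and perfectness hypotheses are not actually needed here. I would finally double-check the two supporting facts on which everything rests — that a complete discretely valued field is never separably closed, and that dependent rank-one valuations are equivalent — since the proof is an application of Schmidt's theorem in contrapositive form.
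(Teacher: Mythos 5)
Your proposal is correct, but it takes a genuinely different route from the paper's. The paper's proof is elementary and intrinsic: it shows $\sigma(U_{L_1})=U_{L_2}$ by characterizing the unit group purely field-theoretically --- the $1$-units are $n$-divisible for every $n$ prime to $p$, and the nonzero Teichm\"uller representatives are exactly $\bigcap_{n\ge1}(L_i^{\times})^{p^n}$, which is precisely where perfectness of $k$ and residue characteristic $p$ are used --- and then pins down the normalization by noting that $\nu_{L_2}\circ\sigma$ induces an isomorphism $L_1^{\times}/U_{L_1}\cong\mathbb{Z}$ and that $\nu_{L_2}(\sigma(\pi_{L_1}))\ge0$ because $1+\pi_{L_1}$ is a $1$-unit. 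You instead transport $\nu_{L_1}$ to a second complete (hence henselian) valuation $w$ on $L_2$ and invoke F.~K.~Schmidt's theorem in contrapositive: since $L_2$ is not separably closed (a uniformizer has no $\ell$-th root for a prime $\ell\neq\operatorname{char}(L_2)$, and $X^{\ell}-\pi$ is separable), the valuations $v$ and $w$ are dependent, hence equivalent since both have rank one, hence equal since both surject onto $\mathbb{Z}$ and the only order-preserving automorphism of $\mathbb{Z}$ is the identity. All of these supporting steps check out. Your closing observation is also accurate and worth emphasizing: your argument needs no hypothesis on the residue field whatsoever, so it proves the stronger statement that any abstract field isomorphism of complete discretely valued fields preserves the normalized valuation, while the paper's Teichm\"uller characterization genuinely requires $k$ perfect of characteristic $p$. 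The trade-off is that the paper's proof is short and self-contained, whereas yours is conceptually cleaner and more general but leans on a substantial external theorem from valuation theory (Engler--Prestel); note that the paper itself flags the lemma as well-known and points to the discussion in \cite{overflow}, where valuation-theoretic arguments in the spirit of yours appear.
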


\begin{proof} The group $U_{1,L_1}$ is $n$-divisible
for all $n$ prime to $p$, so we have
$\sigma(U_{1,L_1})\subset U_{L_2}$.  For $i=1,2$ the
group $T_i$ of
nonzero Teichm\"uller representatives of $L_i$ is equal
to $\bigcap_{i=1}^{\infty}(L_i^{\times})^{p^i}$, so we
have $\sigma(T_1)=T_2$.  Since $U_{L_i}=T_i\cdot
U_{L_i,1}$ this implies $\sigma(U_{L_1})\subset
U_{L_2}$.  The same reasoning shows that
$\sigma^{-1}(U_{L_2}) \subset U_{L_1}$, so we get
$\sigma(U_{L_1})=U_{L_2}$.  It follows that
$\nu_{L_2}\circ\sigma$, like $\nu_{L_1}$, induces an
isomorphism of $L_1^{\times}/U_{L_1}$ onto $\mathbb Z$.
Let $\pi_{L_1}$ be a prime element of $L_1$.  Then
$1+\pi_{L_1}\in U_{L_1,1}$, so
$\nu_{L_2}(\sigma(1+\pi_{L_1}))=0$.  Hence
$\nu_{L_2}(\sigma(\pi_{L_1}))\ge0$.  Since
$\nu_{L_2}(\sigma(\pi_{L_1}))$ generates $\mathbb Z$,
it follows that $\nu_{L_2}(\sigma(\pi_{L_1}))=1$.
We conclude that $\nu_{L_2}\circ\sigma=\nu_{L_1}$.
\end{proof}

For $f(X) \in K[X]$ and $\varphi \in \text{Aut}_k(K)$ we
let $f^{\varphi}(X)$ denote the polynomial obtained by
applying $\varphi$ to the coefficients of $f(X)$.  The
following lemma is a straightforward ``transport of
structure'' result:

\begin{lemma} \label{well}
Let $f(X)$ and $g(X)$ be Eisenstein polynomials with
coefficients in $K$ such that $f(X) \sim g(X)$, and let
$\varphi \in \text{Aut}_k(K)$.  Then $f^{\varphi}(X)
\sim g^{\varphi}(X)$.
\end{lemma}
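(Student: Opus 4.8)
The plan is to use $\varphi$ to transport the given $K$-isomorphism witnessing $f(X) \sim g(X)$ into one witnessing $f^{\varphi}(X) \sim g^{\varphi}(X)$. The key observation is that $\varphi$ induces a ring automorphism $\Phi$ of $K[X]$ by acting on coefficients and fixing $X$; since $\Phi$ carries the ideal $(f(X))$ onto $(f^{\varphi}(X))$ — because $\Phi(f(X)) = f^{\varphi}(X)$ on the nose — it descends to a ring isomorphism $\theta_f : K[X]/(f(X)) \to K[X]/(f^{\varphi}(X))$, and likewise to $\theta_g : K[X]/(g(X)) \to K[X]/(g^{\varphi}(X))$.

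First I would record the crucial compatibility on the base field: each of $\theta_f$ and $\theta_g$ restricts to $\varphi$ on the constant subfield $K$, so neither is itself a $K$-isomorphism, but both are $\varphi$-semilinear. Since $\varphi \in \text{Aut}_k(K)$ is invertible, each $\theta$ is an isomorphism, and $\theta_f^{-1}$ restricts to $\varphi^{-1}$ on $K$.

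Next, let $\psi : K[X]/(f(X)) \to K[X]/(g(X))$ be the $K$-isomorphism supplied by the hypothesis $f(X) \sim g(X)$; by definition $\psi$ restricts to the identity on $K$. I would then form the composite
\[
\theta_g \circ \psi \circ \theta_f^{-1} : K[X]/(f^{\varphi}(X)) \to K[X]/(g^{\varphi}(X)),
\]
which is a ring isomorphism, being a composite of ring isomorphisms. Restricting to $K$ yields $\varphi \circ \mathrm{id} \circ \varphi^{-1} = \mathrm{id}$, so the composite is in fact a $K$-isomorphism, giving $f^{\varphi}(X) \sim g^{\varphi}(X)$.

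There is no serious obstacle here; the only point requiring care is the bookkeeping of the action on $K$, namely verifying that the two copies of the $\varphi$-twist introduced by $\theta_f$ and $\theta_g$ cancel precisely because $\psi$ is $K$-linear. Making $\Phi$ explicit together with its descent to the quotient rings, and confirming $\Phi(f(X)) = f^{\varphi}(X)$, suffices to justify that $\theta_f$ and $\theta_g$ are well-defined and restrict to $\varphi$ on constants.
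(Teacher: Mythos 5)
Your argument is correct and is precisely the ``transport of structure'' the paper has in mind (the paper states Lemma~\ref{well} without proof, calling it straightforward for exactly this reason): the $\varphi$-semilinear isomorphisms $\theta_f$ and $\theta_g$ induced by applying $\varphi$ to coefficients conjugate the given $K$-isomorphism $\psi$ into one from $K[X]/(f^{\varphi}(X))$ to $K[X]/(g^{\varphi}(X))$, and the two $\varphi$-twists cancel on $K$ because $\psi$ is $K$-linear. Your bookkeeping of the restriction to $K$, including the verification that $\Phi(f(X)) = f^{\varphi}(X)$ so that $\theta_f$ is well defined, is exactly the content the paper leaves to the reader.
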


Let $\mathscr{A} = \text{Aut}_k(K)$ denote the group of
$k$-automorphisms of $K$.  Since all $k$-automorphisms
of $K = k((\pi_K))$ are continuous by
Lemma~\ref{contin}, every $\varphi \in \mathscr{A}$ is
determined by the value of
$\varphi(\pi_K)$.  Furthermore, $\mathscr{A}$ acts
transitively on the set of prime elements of $K$.
It follows that the group consisting of the power series
\[\left\{\sum_{i=1}^{\infty} \, a_i t^i : a_i \in k, \;
a_1 \not= 0 \right\}\]
with the operation of substitution is isomorphic to the
opposite group $\mathscr{A}^{op}$ of $\mathscr{A}$.  For
every $\varphi \in \mathscr{A}$ there are $l_{\varphi}
\in k^{\times}$
and $v_{\varphi} \in U_{1,K}$ such that $\varphi(\pi_K)
= l_{\varphi} \cdot v_{\varphi} \cdot \pi_K$.  Let
\[
\mathscr{N} = \{\sigma \in \mathscr{A}:
\sigma(\pi_K) \in U_{1,K} \cdot \pi_K \}
\]
be the group of wild automorphisms of $K$.  Then
$\mathscr{N}^{op}$ is isomorphic to the Nottingham Group
over $k$ (see \cite{klopsch}).  Furthermore, $\mathscr{N}$
is normal in $\mathscr{A}$, and $\mathscr{A} / \mathscr{N}
\cong k^{\times}$. 

Let $\varphi \in \mathscr{A}$ and let
$A_{\omega,u}^{b}(X) \in \mathscr{P}_{b}$.  Then
by Theorem~\ref{Apoly} there exist $\omega' \in
k^{\times}$ and $u' \in U_{1,K}$ such that
\begin{align*}
K[X]/((A_{\omega,u}^{b})^{\varphi}(X)) & = K[X]/(X^p -
\varphi(\omega \pi_K^m)X^{\lambda} - \varphi(\pi_K u)) \\
& \cong K[X]/(A_{\omega',u'}^{b}(X)).
\end{align*}
It follows from Lemma~\ref{well} that
\begin{equation} \label{phiA}
\varphi \cdot [A_{\omega,u}^{b}(X)] =
[A_{\omega',u'}^{b}(X)]
\end{equation}
gives a well-defined action of $\mathscr{A}$ on
$\mathscr{P}_{b} / {\sim}$.  The following theorem
computes explicit values for $\omega'$ and $u'$ in
(\ref{phiA}).  Note that since $k$ is perfect,
$l_{\varphi}$ has a unique $p$th root
$l_{\varphi}^{\frac{1}{p}}$ in $k$.

\begin{thm} \label{specific}
Let $\varphi \in \mathscr{A}$ and
$A_{\omega,u}^{b}(X) \in \mathscr{P}_{b}$. 
Then $\varphi \cdot [A_{\omega,u}^{b}(X)] = [A_{\omega'
, u' }^{b}(X)]$, with
$\omega' = \omega \cdot l_{\varphi}^{\frac{(p-1) b}{p}}$,
$u' = \varphi(u) \cdot v_{\varphi}^h$, and $h =
\frac{p-\lambda-pm}{p-\lambda}$.
\end{thm}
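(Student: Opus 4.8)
The plan is to compute the class $\varphi\cdot[A_{\omega,u}^{b}(X)]$ by first applying $\varphi$ to the coefficients and then rescaling the variable to return the resulting polynomial to Amano form. Since $\varphi$ induces the identity on $k$ and fixes the nonzero Teichm\"uller representatives (as in the proof of Lemma~\ref{contin}), we have $\varphi(\omega)=\omega$; combining this with $\varphi(\pi_K)=l_{\varphi}v_{\varphi}\pi_K$ gives
\[
(A_{\omega,u}^{b})^{\varphi}(X) = X^p - \omega\, l_{\varphi}^m v_{\varphi}^m\, \pi_K^m X^{\lambda} - \varphi(u)\, l_{\varphi} v_{\varphi}\, \pi_K.
\]
First I would observe that this is again Eisenstein of degree $p$ with middle term $X^{\lambda}$, but its $X^{\lambda}$-coefficient carries an unwanted $1$-unit factor $v_{\varphi}^m$ together with a $k^{\times}$-factor $l_{\varphi}^m$, so it is not yet literally an Amano polynomial.

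Next I would substitute $X=c\,\rho$ for a unit $c\in U_K$ to be determined and divide through by $c^p$. Because $\nu_K(c)=0$, this preserves the valuations of the two lower coefficients, so a root $\rho$ again generates a degree-$p$ extension with break $b$, and it satisfies
\[
\rho^p - \omega\, l_{\varphi}^m v_{\varphi}^m\, c^{\lambda-p}\,\pi_K^m\rho^{\lambda} - \varphi(u)\, l_{\varphi} v_{\varphi}\, c^{-p}\,\pi_K = 0.
\]
Writing $c=l_c w_c$ with $l_c\in k^{\times}$ and $w_c\in U_{1,K}$, the requirement that this be a genuine Amano polynomial forces the $U_{1,K}$-part $v_{\varphi}^m w_c^{\lambda-p}$ of the $\rho^{\lambda}$-coefficient to equal $1$; since $1\le\lambda\le p-1$ makes $p-\lambda$ prime to $p$, this pins down $w_c = v_{\varphi}^{m/(p-\lambda)}\in U_{1,K}$. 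To match the stated value $\omega'=\omega\, l_{\varphi}^{(p-1)b/p}$ I would then use perfectness of $k$ to set $l_c=l_{\varphi}^{1/p}$; a short computation with $(p-1)b=(m-1)p+\lambda$ yields $l_{\varphi}^m l_c^{\lambda-p}=l_{\varphi}^{(p-1)b/p}$, so the $\rho^{\lambda}$-coefficient becomes exactly $\omega'\pi_K^m$.

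Finally, with $c$ so chosen I would read off the constant term: the factor $l_{\varphi}$ cancels against $l_c^{-p}=l_{\varphi}^{-1}$, leaving $u'=\varphi(u)\,v_{\varphi}\,w_c^{-p}=\varphi(u)\,v_{\varphi}^{1-pm/(p-\lambda)}$, and $1-\tfrac{pm}{p-\lambda}=\tfrac{p-\lambda-pm}{p-\lambda}=h$ is the claimed exponent; here $\varphi(u)\in U_{1,K}$ because $\varphi$ is valuation-preserving and fixes residues, so $u'\in U_{1,K}$ as required. Since $\rho=X/c$ generates the same field as a root of $(A_{\omega,u}^{b})^{\varphi}$, the polynomial $A_{\omega',u'}^{b}$ is $\sim$-equivalent to $(A_{\omega,u}^{b})^{\varphi}$, and Lemma~\ref{well} together with the definition of the action identifies the two classes, completing the proof. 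I expect the only genuine obstacle to be the simultaneous choice of $l_c$ and $w_c$: the two normalization conditions---clearing the $1$-unit from the $\rho^{\lambda}$-coefficient and producing the $p$th-root factor $l_{\varphi}^{(p-1)b/p}$---must hold at once, and it is precisely the interplay of the perfectness of $k$ with the coprimality of $p-\lambda$ to $p$ that makes both solvable.
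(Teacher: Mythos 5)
Your proposal is correct and follows essentially the same route as the paper: apply $\varphi$ to the coefficients, then rescale the variable by the unit $l_{\varphi}^{1/p}v_{\varphi}^{m/(p-\lambda)}$ (your $c=l_c w_c$ is exactly this factorization) and divide by its $p$th power to recover an Amano polynomial, with the identity $(p-1)b=(m-1)p+\lambda$ giving $\omega'$ and the exponent computation giving $u'=\varphi(u)v_{\varphi}^h$. Your explicit justification that $\varphi(\omega)=\omega$ and that perfectness of $k$ and $p\nmid(p-\lambda)$ make the two normalizations solvable simply makes transparent what the paper leaves implicit.
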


\begin{proof}
By applying $\varphi$ to the coefficients of
$A_{\omega, u}^{b}(X)$ we get
\[
(A_{\omega, u}^{b})^{\varphi}(X) =
X^p - \omega l_{\varphi}^m v_{\varphi}^m \pi_K^m
X^{\lambda} - \varphi(u) l_{\varphi} v_{\varphi} \pi_K.
\]
Set $X = l_{\varphi}^{\frac{1}{p}}
v_{\varphi}^{\frac{m}{p-\lambda}} Z$.  Then
\begin{align*}
l_{\varphi}^{-1}v_{\varphi}^{\frac{-pm}{p-\lambda}}
(A_{\omega,u}^{b})^{\varphi}(X) & = Z^p - \omega
l_{\varphi}^{\frac{(p-1) b}{p}}
\pi_K^m Z^{\lambda} - \varphi(u) v_{\varphi}^h  \pi_K \\
& = Z^p - \omega' \pi_K^m Z^{\lambda} - u' \pi_K.
\end{align*}
Since $l_{\varphi}^{\frac{1}{p}}
v_{\varphi}^{\frac{m}{p-\lambda}} \in K$, it follows that
\[
K[X]/(A_{\omega,u}^{b})^{\varphi}(X)) \cong
K[X] / (A_{\omega',u'}^{b}(X)).
\]
\end{proof}

To determine the orbit of $[A_{\omega,u}^{b}(X)]$ under
the action of $\mathscr{A}$ we need the following
lemmas.  Let $\mathbb{Z}_{p}^{\times}$ denote the unit
group of the ring of $p$-adic integers.

\begin{lemma} \label{onto}
Let $u \in U_{1,K}$, and $h \in \mathbb{Z}_p^{\times}$.
Then
\[ U_{1,K} = \left \{ \sigma(u) \cdot \left(
\frac{\sigma(\pi_K)}{\pi_K} \right)^h : \; \sigma \in
\mathscr{N} \right \}. \]
\end{lemma}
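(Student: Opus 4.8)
The plan is to fix $u\in U_{1,K}$ and $h\in\mathbb Z_p^\times$ and prove that the map $\Phi:\mathscr N\to U_{1,K}$, $\Phi(\sigma)=\sigma(u)\cdot(\sigma(\pi_K)/\pi_K)^h$, is a surjection onto $U_{1,K}$; the reverse inclusion is immediate. Indeed, for $\sigma\in\mathscr N$ the map $\sigma$ fixes $k$ pointwise and preserves $\nu_K$ by Lemma~\ref{contin}, hence $\sigma(U_{1,K})=U_{1,K}$ and in particular $\sigma(u)\in U_{1,K}$; moreover $\sigma(\pi_K)/\pi_K\in U_{1,K}$ by the definition of $\mathscr N$, so its $h$-th power also lies in $U_{1,K}$, and $\Phi(\sigma)\in U_{1,K}$. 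Thus only surjectivity needs an argument, and I would obtain it by matching coefficients along the $\pi_K$-adic filtration of $U_{1,K}$.

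Using the identification of $\mathscr N^{op}$ with the Nottingham group, I write $\sigma\in\mathscr N$ as $\sigma(\pi_K)=\pi_K+\sum_{i\ge 2}a_i\pi_K^{\,i}$ with $a_i\in k$, so that $\epsilon_\sigma:=\sigma(\pi_K)/\pi_K=1+\sum_{i\ge 1}a_{i+1}\pi_K^{\,i}$ has $a_{n+1}$ as its $\pi_K^{\,n}$-coefficient. The crux is the following triangularity statement: when $\Phi(\sigma)$ is expanded as a power series in $\pi_K$, its coefficient of $\pi_K^{\,n}$ equals $\bar h\,a_{n+1}$ plus a polynomial in $a_2,\dots,a_n$ and the coefficients of $u$, where $\bar h\in\mathbb F_p$ is the reduction of $h$. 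To see this I would expand $\epsilon_\sigma^{\,h}=\sum_{j\ge 0}\binom{h}{j}(\epsilon_\sigma-1)^j$ and note that the degree-one term contributes exactly $\bar h\,a_{n+1}$ to the $\pi_K^{\,n}$-coefficient while all higher terms contribute only products of $a_2,\dots,a_n$; likewise, in $\sigma(u)=1+\sum_{j\ge 1}u_j\pi_K^{\,j}\epsilon_\sigma^{\,j}$ the $\pi_K^{\,n}$-coefficient is assembled from the $\pi_K^{\,n-j}$-coefficients of $\epsilon_\sigma^{\,j}$ with $j\ge 1$, which involve only $a_2,\dots,a_n$. Hence $a_{n+1}$ enters the $\pi_K^{\,n}$-coefficient of $\Phi(\sigma)$ solely through the linear term $\bar h\,a_{n+1}$.

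Given this, surjectivity follows by recursion. Since $h\in\mathbb Z_p^\times$ we have $\bar h\in\mathbb F_p^\times\subseteq k^\times$, so multiplication by $\bar h$ is a bijection of $k$. Writing the target as $w=1+\sum_{n\ge 1}w_n\pi_K^{\,n}$, I determine the $a_i$ one at a time: having fixed $a_2,\dots,a_n$, the equation demanding that the $\pi_K^{\,n}$-coefficient of $\Phi(\sigma)$ equal $w_n$ is linear in $a_{n+1}$ with invertible leading coefficient $\bar h$, hence has a unique solution $a_{n+1}\in k$. The resulting power series defines an element $\sigma\in\mathscr N$, and because the $\pi_K^{\,n}$-coefficient of $\Phi(\sigma)$ depends only on $a_2,\dots,a_{n+1}$, the construction forces $\Phi(\sigma)=w$. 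I expect the main obstacle to be the triangularity claim of the second paragraph --- verifying cleanly that $a_{n+1}$ occurs linearly with unit coefficient and affects no lower-order coefficient; once that bookkeeping is in place, the unit hypothesis on $h$ makes the recursion go through automatically.
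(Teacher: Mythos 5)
Your proof is correct, but it takes a genuinely different route from the paper's. The paper disposes of the lemma in three lines by a change of uniformizer: it sets $v=u^{1/h}$ (which exists because $U_{1,K}$ is a $\mathbb{Z}_p$-module and $h\in\mathbb{Z}_p^{\times}$), uses the transitivity of $\mathscr{N}$ on prime elements of the form $(\text{1-unit})\cdot\pi_K$ to get $U_{1,K}=\{\sigma(\pi_K')/\pi_K':\sigma\in\mathscr{N}\}$ for $\pi_K'=v\pi_K$, rewrites $v\,\sigma(\pi_K')/\pi_K'=\sigma(u)^{1/h}\,\sigma(\pi_K)/\pi_K$, and finishes by raising to the $h$-th power, invoking $U_{1,K}^h=U_{1,K}$. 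You instead prove surjectivity by successive approximation along the filtration of $U_{1,K}$, and your key triangularity claim does check out: in $\epsilon_\sigma^h=\sum_{j\ge0}\binom{h}{j}(\epsilon_\sigma-1)^j$ the $\pi_K^n$-coefficient of $(\epsilon_\sigma-1)^j$ for $j\ge2$ is a sum over compositions $i_1+\cdots+i_j=n$ with every $i_l\le n-1$, hence involves only $a_2,\dots,a_n$, and the $\pi_K^n$-coefficient of $\sigma(u)=1+\sum_{j\ge1}u_j\pi_K^j\epsilon_\sigma^j$ is built from $\pi_K^{n-j}$-coefficients of $\epsilon_\sigma^j$ with $j\ge1$, again involving only $a_2,\dots,a_n$; so $a_{n+1}$ enters the $\pi_K^n$-coefficient of $\Phi(\sigma)$ exactly through the linear term $\bar h\,a_{n+1}$, and the recursion closes because $\bar h\in\mathbb{F}_p^{\times}$. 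What the paper's route buys is brevity and a clean conceptual mechanism (it recycles the transitivity of the Nottingham-type action and the $\mathbb{Z}_p$-module structure of $U_{1,K}$, the same facts that supply $u^{1/h}$). What yours buys is self-containedness --- it effectively reproves the transitivity fact with the twist by $\sigma(u)$ folded in, and it isolates precisely where the hypothesis $h\in\mathbb{Z}_p^{\times}$ is used, namely $\bar h\ne0$ --- and it yields a slightly stronger conclusion as a byproduct: since each $a_{n+1}$ is \emph{uniquely} determined at stage $n$, the map $\sigma\mapsto\sigma(u)\cdot\bigl(\sigma(\pi_K)/\pi_K\bigr)^h$ is in fact a bijection from $\mathscr{N}$ onto $U_{1,K}$, not merely a surjection.
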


\begin{proof}
Let $v = u^{\frac{1}{h}} \in U_{1,K}$.  Then $\pi_K' = v
\pi_K$ is a prime element of $K$.  We have
\begin{align*}
U_{1,K} 
&= \left \{ \frac{v \sigma(\pi_K')}{\pi_K'} : \sigma \in
\mathscr{N} \right \} \\
&= \left \{ \frac{\sigma(v \pi_K)}{\pi_K} : \sigma \in
\mathscr{N} \right \} \\
&= \left \{ \sigma(u)^{\frac{1}{h}} \cdot \frac{
\sigma(\pi_K)}{\pi_K} : \sigma \in \mathscr{N}
\right \}.
\end{align*}
Since $h \in \mathbb{Z}_p^{\times}$, we have $U_{1,K}^h
= U_{1,K}$.  Hence by raising to the power $h$ we
obtain
\[
U_{1,K} = \left \{ \sigma(u) \cdot \left(
\frac{\sigma(\pi_K)}{\pi_K} \right)^h : \; \sigma \in
\mathscr{N} \right \}.
\]
\end{proof}

\begin{lemma} \label{same}
Let $c \in k^{\times}$ and define $\tau_c \in \mathscr{A}$
by $\tau_c(\pi_K) = c\pi_K$.  Let $\mathscr{N}_c =
\mathscr{N} \tau_c$ be the right coset of $\mathscr{N}$
in $\mathscr{A}$ represented by $\tau_c$.  Then for
$u \in U_{1,K}$ and $h \in \mathbb{Z}_p^{\times}$ we
have
\[
U_{1,K} = \{ \varphi(u) \cdot v_{\varphi}^h : \varphi
\in \mathscr{N}_c \}.
\]
\end{lemma}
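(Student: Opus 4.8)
The plan is to reduce everything to Lemma~\ref{onto}, which already handles the case $\varphi \in \mathscr{N}$, by absorbing the extra factor $\tau_c$ into a change of the unit $u$. First I would observe that every $\varphi \in \mathscr{N}_c = \mathscr{N}\tau_c$ can be written uniquely as $\varphi = \sigma\tau_c$ with $\sigma \in \mathscr{N}$, since right multiplication by $\tau_c$ is a bijection from $\mathscr{N}$ onto the coset $\mathscr{N}_c$. Thus as $\varphi$ ranges over $\mathscr{N}_c$, the associated $\sigma$ ranges over all of $\mathscr{N}$.

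Next I would pin down $v_\varphi$ in terms of $\sigma$. Applying $\varphi = \sigma\tau_c$ to $\pi_K$ gives $\varphi(\pi_K) = \sigma(\tau_c(\pi_K)) = \sigma(c\pi_K) = c\,\sigma(\pi_K)$, using that $\sigma$ fixes $c \in k^{\times}$. Since $\sigma \in \mathscr{N}$ we have $\sigma(\pi_K) = v_\sigma \pi_K$ with $v_\sigma \in U_{1,K}$, so $\varphi(\pi_K) = c\, v_\sigma\, \pi_K$; hence $l_\varphi = c$ and, crucially, $v_\varphi = v_\sigma$. In particular $\sigma(\pi_K)/\pi_K = v_\sigma = v_\varphi$, which is exactly the quantity appearing in Lemma~\ref{onto}.

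Now set $u' = \tau_c(u)$. Since $\tau_c \in \mathscr{A}$ fixes $k$ and sends $\pi_K \mapsto c\pi_K$, writing $u = 1 + \sum_{i \ge 1} a_i \pi_K^i$ shows that $\tau_c(u) = 1 + \sum_{i \ge 1} a_i c^i \pi_K^i$ is again a $1$-unit, so $u' \in U_{1,K}$. Then $\varphi(u) = \sigma(\tau_c(u)) = \sigma(u')$, whence $\varphi(u) \cdot v_\varphi^h = \sigma(u') \cdot v_\sigma^h$. Applying Lemma~\ref{onto} to the $1$-unit $u'$ and the same $h \in \mathbb{Z}_p^{\times}$, and using $\sigma(\pi_K)/\pi_K = v_\sigma$, gives $U_{1,K} = \{ \sigma(u') \cdot v_\sigma^h : \sigma \in \mathscr{N} \}$. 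Combining this with the previous identity yields $U_{1,K} = \{ \varphi(u) \cdot v_\varphi^h : \varphi \in \mathscr{N}_c \}$, as desired.

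The computations are all routine; the only real care is in the bookkeeping. The main point to get right is the composition convention: because $\mathscr{A}$ is (anti-)identified with a group of substitutions, I would double-check that for $\varphi = \sigma\tau_c$ the factor picked up on $\pi_K$ is genuinely $v_\sigma$, and that $\varphi(u)$ really equals $\sigma(u')$ with $u' = \tau_c(u)$, rather than $\tau_c(\sigma(u))$ or some $c$-twist. Once the order of composition is fixed consistently, the reduction to Lemma~\ref{onto} is immediate, so I do not anticipate any serious obstacle beyond this bookkeeping.
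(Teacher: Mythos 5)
Your proposal is correct and follows essentially the same route as the paper: the paper's proof likewise writes each $\varphi \in \mathscr{N}_c$ as $\sigma\tau_c$ with $\sigma \in \mathscr{N}$, sets $u' = \tau_c(u)$, and reduces to Lemma~\ref{onto}. Your only addition is to spell out the verification that $v_{\sigma\tau_c} = v_\sigma$ (via $\varphi(\pi_K) = c\,v_\sigma\pi_K$) and that $u' \in U_{1,K}$, steps the paper leaves implicit in its first displayed equality.
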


\begin{proof}
Let $u' = \tau_c(u) \in U_{1,K}$.  Then
\begin{align*}
\{ \varphi(u) \cdot v_{\varphi}^h  : \varphi \in
\mathscr{N}_c \} & = \{ \sigma \tau_c(u) \cdot
v_{\sigma}^h : \sigma \in \mathscr{N} \} \\
& = \{ \sigma (u') \cdot v_{\sigma}^h : \sigma \in
\mathscr{N} \} \\ 
& = U_{1,K},
\end{align*}
where the last equality follows from Lemma~\ref{onto}.
\end{proof}

\begin{thm} \label{orbit}
The orbit of $[A_{\omega,u}^{b}(X)]$ under $\mathscr{A}$
is
\[
\mathscr{A} \cdot [A_{\omega,u}^{b}(X)] =
\{ [A_{\omega \theta,v}^{b}(X)] : \theta \in
(k^{\times})^{(p-1)b}, \; v \in U_{1,K} \}.
\]
\end{thm}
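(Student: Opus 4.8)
The plan is to combine the explicit action formula of Theorem~\ref{specific} with the coset decomposition of $\mathscr{A}$ modulo the wild-automorphism subgroup $\mathscr{N}$. By Theorem~\ref{specific}, a single $\varphi \in \mathscr{A}$ sends $[A_{\omega,u}^{b}(X)]$ to $[A_{\omega',u'}^{b}(X)]$ with $\omega' = \omega \cdot l_{\varphi}^{(p-1)b/p}$ and $u' = \varphi(u)\cdot v_{\varphi}^{h}$, where $h = \frac{p-\lambda-pm}{p-\lambda}$. The key preliminary observation is that $h \in \mathbb{Z}_p^{\times}$: since $1 \le \lambda \le p-1$ the denominator $p-\lambda$ is prime to $p$, while the numerator is congruent to $-\lambda \not\equiv 0 \pmod p$, so neither is divisible by $p$. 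This is precisely the hypothesis needed to invoke Lemma~\ref{onto} and Lemma~\ref{same}.

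First I would decompose $\mathscr{A}$ into the right cosets $\mathscr{N}_c = \mathscr{N}\tau_c$ for $c \in k^{\times}$. Because $\mathscr{A}/\mathscr{N} \cong k^{\times}$ via $\varphi \mapsto l_{\varphi}$, the value $l_{\varphi}$ is constant on each coset, equal to $c$ on $\mathscr{N}_c$. Hence on a fixed coset the first coordinate is pinned down: $\omega' = \omega \cdot c^{(p-1)b/p}$ is the same for every $\varphi \in \mathscr{N}_c$. For the second coordinate, Lemma~\ref{same} (applicable since $h \in \mathbb{Z}_p^{\times}$) shows that as $\varphi$ ranges over $\mathscr{N}_c$ the value $u' = \varphi(u)\cdot v_{\varphi}^{h}$ ranges over all of $U_{1,K}$. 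Thus the single coset $\mathscr{N}_c$ contributes exactly the classes $\{[A_{\omega c^{(p-1)b/p},\,v}^{b}(X)] : v \in U_{1,K}\}$.

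It then remains to range over the cosets, i.e.\ over $c \in k^{\times}$, and identify the set of scalars $c^{(p-1)b/p}$ that appear in the first coordinate. Here I would use that $(p-1)b = (m-1)p + \lambda$ is a positive integer, so $c^{(p-1)b/p}$ denotes the unique $p$th root of $c^{(p-1)b}$ in the perfect field $k$, namely $(c^{1/p})^{(p-1)b}$. Since the $p$th-power map is a bijection on $k^{\times}$, as $c$ runs over $k^{\times}$ the element $c^{1/p}$ also runs over $k^{\times}$, and therefore $\{c^{(p-1)b/p} : c \in k^{\times}\} = \{d^{(p-1)b} : d \in k^{\times}\} = (k^{\times})^{(p-1)b}$. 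Combining this with the previous paragraph, the union of the coset contributions is exactly $\{[A_{\omega\theta,v}^{b}(X)] : \theta \in (k^{\times})^{(p-1)b},\; v \in U_{1,K}\}$; and since every $\varphi \in \mathscr{A}$ lies in one of these cosets, this union is the full orbit, as claimed.

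The step I expect to require the most care is the bookkeeping that separates the two coordinates: confirming that $l_{\varphi}$, and hence $\omega'$, depends only on the coset while $u'$ remains free to fill $U_{1,K}$ inside each coset. The genuinely substantive part of this—that $u'$ sweeps out all of $U_{1,K}$—is already packaged in Lemma~\ref{same}, so the remaining work is light: the perfect-field identification of the scalar set as $(k^{\times})^{(p-1)b}$ and the verification that $h \in \mathbb{Z}_p^{\times}$, both of which are short computations.
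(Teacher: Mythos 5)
Your proof is correct and takes essentially the same route as the paper's: decompose $\mathscr{A}$ into the cosets $\mathscr{N}_c$, use Theorem~\ref{specific} to pin down $\omega' = \omega c^{(p-1)b/p}$ on each coset, invoke Lemma~\ref{same} to let $u'$ sweep out $U_{1,K}$, and then use perfectness of $k$ to identify $\{c^{(p-1)b/p} : c \in k^{\times}\}$ with $(k^{\times})^{(p-1)b}$. Your explicit verification that $h \in \mathbb{Z}_p^{\times}$ is left implicit in the paper, and making it is a sensible addition since it is precisely the hypothesis Lemma~\ref{same} requires.
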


\begin{proof}
Let $c \in k^{\times}$ and $\varphi \in \mathscr{N}_c$.
Then $l_{\varphi} = c$, so by Theorem~\ref{specific} we
have
\[\varphi \cdot [A_{\omega,u}^{b}(X)] =
[A_{\omega',u'}], \]
with $\omega' = \omega c^{\frac{(p-1)b}{p}}$, $u' =
\varphi(u) v_{\varphi}^{h}$, and $h = \frac{p - \lambda
- pm}{p - \lambda}$.  Hence by Lemma~\ref{same} we have
\[\mathscr{N}_c \cdot [A_{\omega,u}^{b}(X)] = \{
[A_{\omega',v}] : \omega' = \omega c^{\frac{(p-1)b}{p}},
\; v \in U_{1,K}\}. \]
Since $\mathscr{A}$ is the union of $\mathscr{N}_c$ over
all $c \in k^{\times}$, and $k$ is perfect, the theorem
follows.
\end{proof}

We now give the proof of Theorem~\ref{theorem}.  Let
$R = \{\omega_i : i \in I\}$ be a set of coset
representatives for $k^{\times} /
(k^{\times})^{(p-1)b}$.  For each $\omega_i \in R$ let
$\pi_i \in K_{s}$ be a root of the Amano polynomial
\[
A_{\omega_i,1}^{b}(X) = X^p - \omega_i \pi_K^m
X^{\lambda} - \pi_K.
\]
It follows from Theorem~\ref{orbit} that for every
equivalence class $\mathcal{C} \in \mathcal{S}_{b}$
there is $i \in I$ such that $K(\pi_i)/K \in
\mathcal{C}$.  On the other hand, if $K(\pi_i)/K$ is
$k$-isomorphic to $K(\pi_j)/K$ then by
Theorem~\ref{specific}, for some $\varphi \in
\mathscr{A}$ we have 
\[
[A_{\omega_j,1}^b(X)] = \varphi \cdot
[A_{\omega_i,1}^b(X)] =
[A_{\omega_i', v_{\varphi}^h}^{b}(X)].
\]
with $\omega_i' = \omega_i
l_{\varphi}^{\frac{(p-1)b}{p}}$.  It follows from
Theorem~\ref{sametype} that $A_{\omega_j,1}^b(X)$ and
$A_{\omega_i',
v_{\varphi}^h}^{b}(X)$ have the same type, so we have
$\omega_j = \omega_i l_{\varphi}^{\frac{(p-1)b}{p}}$.
Since $l_{\varphi}^{\frac{1}{p}} \in k^{\times}$, this
implies that $\omega_j \omega_i^{-1} \in
(k^{\times})^{(p-1)b}$.  Since $\omega_i$ and $\omega_j$
are coset representatives for $k^{\times} /
(k^{\times})^{(p-1)b}$, we get $\omega_i = \omega_j$.
This proves the first part of Theorem~\ref{theorem}.
The second part follows from Theorem~\ref{galois}.

\begin{remark} \label{klopsch}
In \cite{klopsch}, Klopsch uses a different method to
compute the cardinality of $\mathcal{S}_{b}^g$.
Let $L=k((\pi_L))$ be a local function field with
residue field $k$, and set $\mathscr{F} =
\text{Aut}_k(L)$.  Then there is a one-to-one
correspondence between cyclic subgroups $G \le
\mathscr{F}$ of order $p$ and subfields $M = L^{G}$ of
$L$ such that $L / M$ is a cyclic totally ramified
extension of degree $p$.  For $i=1,2$ let $G_i$ be a
cyclic subgroup of $\mathscr{F}$ of order
$p$ and set $K_i = L^{G_i}$.  Say the extensions
$L/K_1$ and $L/K_2$ are $k^*$-isomorphic if there exists
$\eta \in \mathscr{F} = \text{Aut}_k(L)$ such that
$\eta(K_1) = K_2$; this is equivalent to
$\eta^{-1} G_1 \eta = G_2$.

For $i=1,2$ let $\psi_i : K \rightarrow L$ be a
$k$-linear field embedding such that $\psi_i(K)=K_i$.
We can use $\psi_i$ to
identify $K$ with $K_i$, which makes $L$ an extension of
$K$.  We easily see that the extensions $\psi_1 : K
\hookrightarrow L$ and $\psi_2 : K \hookrightarrow L$ are
$k$-isomorphic if and only if $L/K_1$ and $L/K_2$ are
$k^*$-isomorphic.  Therefore classifying $k$-isomorphism
classes of degree-$p$ Galois extensions of $K$ is
equivalent to classifying conjugacy classes of subgroups
of order $p$ in $\mathscr{F}$.

For $i=1,2$ let $G_i = \langle \gamma_i \rangle$.  If
$G_1$ and $G_2$ have ramification break $b$ then
\begin{align*}
\gamma_1(\pi_L) & \equiv \pi_L + r_{b + 1}
\pi_L^{b + 1} \pmod{\pi_L^{b + 2}} \\
\gamma_2(\pi_L) & \equiv \pi_L + s_{b + 1}
\pi_L^{b + 1} \pmod{\pi_L^{b + 2}}
\end{align*}
for some $r_{b + 1} , s_{b + 1} \in k^{\times}$.
Hence for $1 \leq j \le p-1$, we have
\[
\gamma_1^j(\pi_L) \equiv \pi_L + j r_{b+1}
\pi_L^{b+1} \pmod{\pi_L^{b + 2}}.
\]
By Proposition 3.3 of \cite{klopsch}, $\gamma_1^j$ and
$\gamma_2$ are conjugate in $\mathscr{F}$ if and only if
$s_{b+1} = j r_{b+1} t^{b}$, for some
$t \in k^{\times}$.  Therefore the subgroups $G_1$
and $G_2$ are conjugate in $\mathscr{F}$ if and only if
$s_{b+1} \in r_{b+1} \cdot \mathbb{F}_p^{\times} \cdot
(k^{\times})^{b}$.  It follows that the number of
conjugacy classes of subgroups of order $p$ with
ramification break $b$ is
\[|k^{\times} / (\mathbb{F}_p^{\times} \cdot
(k^{\times})^{b})| = |(k^{\times})^{p-1} /
(k^{\times})^{(p-1)b}|.\]
In particular, if $|k| = q < \infty$ then there are
$\displaystyle \gcd \left(\frac{q-1}{p-1},b\right)$ such
conjugacy classes, in agreement with
Corollary~\ref{number}.
\end{remark}

\end{document}